\date{}
\renewcommand{\uppercasenonmath}[1]{}
\numberwithin{equation}{section} \theoremstyle{plain}
\theoremstyle{plain}
\theoremstyle{plain}
\newtheorem{theorem}{Theorem}[section]
\newtheorem{proposition}[theorem]{Proposition}
\newtheorem{lemma}[theorem]{Lemma}
\newtheorem{question}[theorem]{Question}
\newtheorem{example}[theorem]{Example}
\newtheorem*{open question}{Open Question}
\newtheorem{definition}[theorem]{Definition}
\theoremstyle{definition}
\theoremstyle{remark}
\newtheorem{remark}[theorem]{Remark}
\newcommand{\Tor}{\mbox{\rm Tor}}
\def\pd{{\rm pd}}
\def\Hom{{\rm Hom}}
\def\Ext{{\rm Ext}}
\def\Tor{{\rm Tor}}
\def\Ker{{\rm Ker}}
\def\Im{{\rm Im}}
\begin{document}
\begin{center}
{\large  \bf On $S$-($h$-)divisible modules and their $S$-strongly flat covers}

\vspace{0.5cm}
Xiaolei Zhang\\

E-mail: zxlrghj@163.com\\
\end{center}


\bigskip
\centerline { \bf  Abstract}
\bigskip
\leftskip10truemm \rightskip10truemm \noindent
It was proved in \cite{BS02} that every $h$-divisible modules admits an strongly flat cover over all integral domains; and every divisible module over an integral domain $R$ admits a strongly flat cover if and only if $R$ is a Matlis domain. 
 In this paper,  we extend these two results to commutative rings with multiplicative subsets.
\\
\vbox to 0.3cm{}\\
{\it Key Words:} $S$-divisible module; $S$-$h$-divisible module; $S$-strongly flat cover; $S$-Matlis ring.\\
{\it 2020 Mathematics Subject Classification:} 13C11.

\leftskip0truemm \rightskip0truemm
\bigskip
\section{Introduction}

All rings in this paper are assumed to be commutative with identity, and all modules are unitary. Let $R$ be a ring and $S$ be a multiplicative subset of $R,$ that is, $1\in S$ and $s_1s_2\in S$ for any $s_1\in S, s_2\in S.$ We always denote by $R_S$ the localization of $R$ at $S$.  A multiplicative set is called regular if it consists of non-zero-divisors. Note that if a multiplicative set $S$ of $R$ is regular, then $R$ can be viewed as a subring of $R_S$ naturally.

Let $R$ be an integral domain with $Q$ its quotient field.  An $R$-module $M$ is said to be \emph{divisible} if $sM=M$ for any non-zero element $s\in R$; and $h$-divisible if it is a quotient module of $Q$-linear space. Trivially,  $h$-divisible modules are always divisible. It is well-known that every divisible $R$-module is  $h$-divisible characterizes Matlis domains, i.e, domains $R$ satisfying $\pd_RQ\leq 1.$ Recall that an $R$-module $M$ is said to be \emph{weakly cotorsion},  if $\Ext_R^1(Q,M)=0$; and \emph{strongly flat} if $\Ext_R^1(M,N)=0$ for any weakly cotorsion module $N$. In 2002, Bazzoni and Salce \cite{BS02}, resolving an open question proposed by Trlifaj \cite{T00},  
showed that every $R$-module admits an strongly flat cover if and only if $R$ is an almost perfect domain, that is,  $R/I$ is a perfect ring for any non-zero proper ideal $I$ of an integral domain $R$.

In the process of solving Trlifaj's problem, Bazzoni and Salce \cite{BS02} also showed that every $h$-divisible modules admits an strongly flat cover over all integral domains; and every divisible  $R$-module admits a strongly flat cover if and only if the integral domain $R$ is Matlis. The main motivation of this paper is to extend these two results to commutative rings with multiplicative sets, a very extensive generalization. Actually, we obtain that   suppose $R_S$ is a semisimple ring with $S$ a regular multiplicative subset of $R$. Then every $S$-$h$-divisible module admits an $S$-strongly flat cover; and every $S$-divisible module admits an $S$-strongly flat cover if and only if $R$ is an $S$-Matlis ring (see Theorem \ref{main1} and Theorem \ref{main2}). Moreover, we show that the condition that ``$R_S$ is a semisimple ring'' cannot be removed by examples (see Remark \ref{remk1} and Remark \ref{remk2}).

\section{Basic properties of $S$-($h$-)divisible modules}

Let $R$ be a ring and $S$ a  multiplicative subset of $R$.  We say an ideal $I$ of $R$ is an $S$-ideal if $I\cap S\not=\emptyset.$   Recall from \cite{Z24-matlisloc} that  
an $R$-module $M$ is said to be
\begin{enumerate}
	\item \emph{$S$-torsion-free} if $sm=0$ with $s\in S$ and $m\in M$ whence $m=0$;
	\item \emph{$S$-torsion} if for any $m\in M$, there is $s\in S$ such that $sm=0$; 
	\item \emph{$S$-divisible} if $sM=M$ for any $s\in S$;
	\item \emph{$S$-reduced} if it has no $S$-divisible submodule;
	\item \emph{$S$-injective} if $\Ext_R^1(R/I,M)=0$ for any $S$-ideal $I$ of $R$.
\end{enumerate}

When $S$ is regular, every $S$-injective $R$-module is $S$-divisible; and it follows by \cite[Proposition 2.2]{Z24-matlisloc} that every  $R_S$-module is $S$-divisible. Moreover, we have the following result.
\begin{lemma}\label{RSmoduleS-inj}
		Let $R$ be a ring,  $S$ a regular  multiplicative subset of $R$ and $M$ an $R_S$-module. Then $M$ is $S$-injective. 
\end{lemma}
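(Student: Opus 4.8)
The plan is to show that $\Ext^1_R(R/I, M) = 0$ for every $S$-ideal $I$ of $R$, exploiting the fact that $M$ is already an $R_S$-module. First I would apply the localization functor: since $S$ is regular and $I \cap S \neq \emptyset$, the ideal $I_S = IR_S$ contains a unit of $R_S$, hence $I_S = R_S$, i.e. $(R/I)_S = R_S/I_S = 0$. So $R/I$ is an $S$-torsion module. The key point is then the change-of-rings / adjunction: for an $R_S$-module $M$ one has a natural isomorphism $\Ext^1_R(R/I, M) \cong \Ext^1_R(R/I, M)$ computed via the localization sequence, but more directly, I would use that $\Hom_R(-, M)$ factors through the localization because $M$ is an $R_S$-module.

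Concretely, here is the cleanest route. Pick $s \in I \cap S$. Then multiplication by $s$ on $R/I$ is the zero map (since $s \in I$), so $s \cdot \mathrm{id}_{R/I} = 0$ as an endomorphism, and therefore $s$ annihilates $\Ext^1_R(R/I, M)$ as well (functoriality: $s$ acts on $\Ext^1_R(R/I,M)$ both via its action on $R/I$ and via its action on $M$, and these agree). On the other hand, $M$ is an $R_S$-module, so multiplication by $s$ is an isomorphism on $M$, hence multiplication by $s$ is an isomorphism on $\Ext^1_R(R/I, M)$. An abelian group on which multiplication by $s$ is simultaneously zero and an isomorphism must be zero. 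Therefore $\Ext^1_R(R/I, M) = 0$ for every $S$-ideal $I$, which is exactly the definition of $M$ being $S$-injective.

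I expect no serious obstacle here; the only thing to be careful about is the compatibility of the two descriptions of the $s$-action on $\Ext^1_R(R/I, M)$ — namely that the map induced by $s : R/I \to R/I$ coincides with the map induced by $s : M \to M$. This is the standard fact that for $\Ext^n_R(A, B)$ the action of $r \in R$ (central) induced functorially from $A$ equals the one induced from $B$, which follows from the bifunctoriality of $\Ext$ and the naturality of the connecting maps; I would cite this as well-known or give the one-line argument via a projective resolution of $R/I$. Once that compatibility is in hand, the "zero = isomorphism $\Rightarrow$ zero" step finishes the proof immediately.
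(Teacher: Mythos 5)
Your proof is correct, but it follows a genuinely different route from the paper. The paper works with the Baer-style reformulation of the definition: since $\Ext^1_R(R/I,M)=0$ is equivalent (via the long exact sequence for $0\to I\to R\to R/I\to 0$) to every homomorphism $f:I\to M$ extending to $R$, the author picks $s\in I\cap S$ and writes down the explicit extension $g(r)=s^{-1}rf(s)$, then checks that $g$ is well defined (independent of the chosen $s$) and agrees with $f$ on $I$, the latter using that an $R_S$-module is $S$-torsion-free. You instead argue entirely on the level of $\Ext$: the element $s\in I\cap S$ annihilates $R/I$, hence annihilates $\Ext^1_R(R/I,M)$, while it acts invertibly on the $R_S$-module $M$ and hence as an isomorphism on $\Ext^1_R(R/I,M)$; the two actions coincide by the standard bifunctoriality of $\Ext$ over a commutative ring, forcing $\Ext^1_R(R/I,M)=0$. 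Your argument is shorter and works directly with the definition of $S$-injectivity, at the cost of invoking the compatibility of the two $s$-actions on $\Ext$ (which, as you note, is standard and easily verified on a projective resolution of $R/I$); the paper's argument is more elementary and self-contained, producing the extending map by hand. Incidentally, neither argument actually uses the regularity of $S$ beyond the fact that any $R_S$-module is $S$-torsion-free, which holds automatically.
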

\begin{proof} Let $I$ be an $S$-ideal with $s\in I\cap S$, and $f:I\rightarrow M$ be an $R$-homomorphism.
Assume that $f(s)=m\in M$. Define $g:R\rightarrow M$ by $g(r)=s^{-1}rm\in M$. Then $g$ is an $R$-homomorphism. First, we will show $g$ is well-defined. Indeed,
let $s_1\in I\cap S,$ $f(s_1)=m_1$ and  $g_1:R\rightarrow M$ is an $R$-homomorphism induced by $s_1$ as above. Then $$g_1(1)=s_1^{-1}m_1=s_1^{-1}f(s_1)=s^{-1}s_1^{-1}f(ss_1)=s^{-1}s_1^{-1}s_1f(s)=s^{-1}f(s)=g(1).$$
Next, we will show $g$ is an extension of $f$. Indeed, for any $a\in I$, we have $$sf(a)=f(sa)=af(s)=am=s(s^{-1}am).$$ Note that $M$ is $S$-torsion-free. So $f(a)=s^{-1}am=g(a)$. Hence $g$ is an extension of $f$. Consequently, $M$ is $S$-injective.
\end{proof}

Let $R$ be a ring and $S$ a  multiplicative subset of $R$. Let $M$ be an $R$-module. Denote by $$h_S(M)=\sum\limits_{f\in\Hom_R(R_S,M)}\Im(f).$$ 

\begin{definition}\cite{BP19}
	Let $R$ be a ring and $S$ a  multiplicative subset of $R$. 
	An $R$-module $M$ is said to be 
	\begin{enumerate}
		\item $S$-$h$-divisible if $h_S(M)=M$;
		\item $S$-$h$-reduced if $h_S(M)=0$. 
	\end{enumerate}
\end{definition}

It is easy to verify that  an $R$-module $M$  is $S$-$h$-divisible if and only if there is an epimorphism $R_S^{(\kappa)}\twoheadrightarrow M$; and $M$  is $S$-$h$-reduced if and only if $\Hom_R(R_S,M)=0.$ So $S$-$h$-divisible modules are closed under quotients, while $S$-$h$-reduced modules are closed under submodules. Trivially, if the multiplicative subset  $S$ is regular, then every $S$-injective module is $S$-divisible. Moreover, we have the following result.

\begin{proposition}\label{rinj-hd}
	Let $R$ be a ring and $S$ a regular multiplicative subset of $R$. Then an $R$-module is $S$-$h$-divisible if and only if it is a quotient of  an $S$-injective $R$-module.
\end{proposition}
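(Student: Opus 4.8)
The plan is to prove both implications directly from the characterization recorded just before the proposition, namely that an $R$-module $M$ is $S$-$h$-divisible precisely when there is an epimorphism $R_S^{(\kappa)}\twoheadrightarrow M$ for some cardinal $\kappa$.

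First I would handle the "only if" direction. Suppose $M$ is $S$-$h$-divisible. Then there is an epimorphism $\pi\colon R_S^{(\kappa)}\twoheadrightarrow M$. The module $R_S^{(\kappa)}$ is a free $R_S$-module, hence an $R_S$-module, so by Lemma~\ref{RSmoduleS-inj} it is $S$-injective. Thus $M$ is a quotient of an $S$-injective $R$-module, as desired. (One could alternatively just use the weaker fact, already noted in the text, that $M = \sum_{f\in\Hom_R(R_S,M)}\Im(f)$ means $M$ is a quotient of $\bigoplus_{f}R_S$, which is $S$-injective.)

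For the "if" direction, suppose $M$ is a quotient of an $S$-injective module $N$, say $q\colon N\twoheadrightarrow M$. Since $S$-$h$-divisible modules are closed under quotients (noted in the text immediately after the definition), it suffices to show that $N$ itself is $S$-$h$-divisible. The key point is that an $S$-injective module $N$ is $S$-$h$-divisible: one needs to produce, for each element of $N$, a homomorphism $R_S\to N$ hitting it, or equivalently build an epimorphism $R_S^{(\lambda)}\twoheadrightarrow N$. Choose a surjection $R^{(\lambda)}\twoheadrightarrow N$ of $R$-modules; I would argue this lifts/extends along the inclusion $R^{(\lambda)}\hookrightarrow R_S^{(\lambda)}$ using the $S$-injectivity of $N$. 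Concretely, for a single copy: given $a\in N$, pick an $R$-map $R\to N$ with $1\mapsto a$; the inclusion $R\hookrightarrow R_S$ has the property that every finitely generated subideal between $R$ and $R_S$ is of the form $s^{-1}R$ (an $S$-ideal of $R$ up to the iso $R\cong s^{-1}R$), so $S$-injectivity of $N$ gives an extension $R_S\to N$ — indeed this is essentially the computation already carried out inside the proof of Lemma~\ref{RSmoduleS-inj}. Taking the coproduct of such extensions over a generating set of $N$ yields the required epimorphism $R_S^{(\lambda)}\twoheadrightarrow N$, so $N$ is $S$-$h$-divisible, and hence so is its quotient $M$.

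The main obstacle is the technical verification in the "if" direction that $S$-injectivity (defined via vanishing of $\Ext^1_R(R/I,-)$ for $S$-ideals $I$) actually forces $N$ to be a quotient of a free $R_S$-module; one must be careful that extending a map $R\to N$ to $R_S\to N$ can be done compatibly, and that the resulting map from the coproduct is still surjective (which is automatic, since it already is on the sub-coproduct of $R$'s). Regularity of $S$ is used so that $R$ embeds in $R_S$ and $N$ is $S$-torsion-free, exactly as in Lemma~\ref{RSmoduleS-inj}; I would invoke that lemma's argument rather than repeat it.
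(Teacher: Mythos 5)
Your ``only if'' half is exactly the paper's argument and is fine: $R_S^{(\kappa)}$ is an $R_S$-module, hence $S$-injective by Lemma~\ref{RSmoduleS-inj}. The gap is in the ``if'' half, in the step where you claim that $S$-injectivity of $N$ lets you extend a map $R\to N$, $1\mapsto a$, to $R_S\to N$ because ``every finitely generated subideal between $R$ and $R_S$ is of the form $s^{-1}R$.'' That claim is false over general rings: for $R=k[x,y]$ and $S=\{(xy)^n:n\geq 0\}$, the submodule $R\cdot x^{-1}+R\cdot y^{-1}$ of $R_S$ contains $1$ and is finitely generated, but it is isomorphic to the non-principal ideal $(x,y)$, so it is not of the form $s^{-1}R$. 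More seriously, even if you work only with the cyclic modules $s^{-1}R$ (whose directed union is $R_S$), extending $f$ along each inclusion $R\subseteq s^{-1}R$ uses only the principal $S$-ideals $sR$, and the resulting extensions for different $s$ need not be compatible, so they do not assemble into one homomorphism on $R_S$. This coherence problem is precisely the classical divisible versus $h$-divisible distinction (cf.\ Lemma~\ref{sdshd}: the two notions coincide only over $S$-Matlis rings), so no argument that uses only principal steps can succeed. The appeal to the computation inside Lemma~\ref{RSmoduleS-inj} is also backwards: there the target is already an $R_S$-module, and the ability to divide by $s$ in the target is what drives that computation; it says nothing about an arbitrary $S$-injective module.

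The fact you need --- every $S$-injective module is $S$-$h$-divisible --- is true, and the paper's proof simply cites \cite[Proposition 2.7]{Z24-matlisloc} for it (together with closure of $S$-$h$-divisible modules under quotients), rather than reproving it. If you want a self-contained argument, the correct mechanism is a Zorn's lemma (relative Baer) extension that uses arbitrary, typically non-principal, $S$-ideals: take a maximal partial extension $g\colon P\to N$ of $f$ with $R\subseteq P\subseteq R_S$; if some $x=a/s\in R_S\setminus P$ exists, then $I=(P:_R x)$ contains $s$, hence is an $S$-ideal, and $S$-injectivity extends $r\mapsto g(rx)$ from $I$ to $R$, which lets you extend $g$ to $P+Rx$, contradicting maximality; hence $P=R_S$ and $a=g(1)$ lies in $h_S(N)$. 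As written, your proposal is missing exactly this use of the full strength of the definition of $S$-injectivity.
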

\begin{proof} Let $M$ be an  $S$-$h$-divisible $R$-module. Then there is an epimorphism $R_S^{(\kappa)}\twoheadrightarrow M$. Note that  $R_S^{(\kappa)}$ is  $S$-injective by Lemma \ref{RSmoduleS-inj}. So $M$ is a quotient of  an $S$-injective $R$-module. 
	
On the other hand,  it follows by \cite[Proposition 2.7]{Z24-matlisloc} that every $S$-injective $R$-module is $S$-$h$-divisible, and the class of $S$-$h$-divisible $R$-modules is closed under quotient modules. \end{proof}

The following example shows that $S$-$h$-divisible modules may be not a quotient of an injective module.
\begin{example} Let $R:=\mathbb{Z}(+)\mathbb{Q}/\mathbb{Z}$ be the trivial extension of $\mathbb{Z}$ with $\mathbb{Q}/\mathbb{Z}$. Let $S$ be the set of all non-zero-divisors of $R$. Then $R$ is a total ring of quotients, and so $R$ itself is $S$-$h$-divisible. However, $R$ is not a quotient of an injective $R$-module.	Indeed, suppose there is an exact sequence $0\rightarrow K\rightarrow E\rightarrow R\rightarrow0$ with $E$ injective. Then $R$ is self-injective ring. However, the $R$-homomorphism $f:\langle 2\rangle(+)\mathbb{Q}/\mathbb{Z}\rightarrow R$ with $(2n,\frac{b}{a}+\mathbb{Z})\mapsto (n,\frac{b}{2a}+\mathbb{Z})$ can not be extended to $R$, which is a contradiction.	
\end{example}





Let $M$ be an $R$-module, and $\mathscr{F}$ a class of $R$ modules. Recall from \cite{gt} that an $R$-homomorphism $f : F(M)\rightarrow M$ with $F(M)\in \mathscr{F}$ is called an  $\mathscr{F}$  precover if for
any $R$-homomorphism from an $R$-module in $\mathscr{F}$ to $M$ factors through $f$. Moreover, $f$ is called an  $\mathscr{F}$  cover if  $f$ is  an  $\mathscr{F}$ precover, and whenever $f$ factors as\ $f=f\circ h$\ with $h:F(M)\rightarrow F(M)$ an endmorphism of $F(M)$, then $h$ must be an automorphism. The notion of  pre(envelopes) can be defined dually.

It follows by \cite[Theorem 4.1]{F23} that all modules admit both divisible and $h$-divisible covers over any commutative ring. Now, we give the $S$-version of this result.
\begin{theorem}
	Let $R$ be a ring and $S$ a multiplicative subset of $R$. Then every $R$-module admits an $S$-$(h$-$)$divisible  cover.
\end{theorem}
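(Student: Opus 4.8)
The plan is to establish the existence of covers by invoking the standard cotorsion-theory machinery: a class $\mathscr{F}$ of modules that is closed under direct limits and contains enough "building blocks" for an arbitrary module, and for which the pair $(\mathscr{F},\mathscr{F}^{\perp})$ forms a complete cotorsion pair, automatically provides $\mathscr{F}$-covers (this is the Enochs-type argument used in \cite{F23} for the divisible and $h$-divisible cases). So the first step is to identify, for each of the two classes, a good generating set. For $S$-$h$-divisible modules, by the remark just above (an $R$-module $M$ is $S$-$h$-divisible iff there is an epimorphism $R_S^{(\kappa)}\twoheadrightarrow M$), the single module $R_S$ generates the whole class in the appropriate sense: every $S$-$h$-divisible module is a quotient of a direct sum of copies of $R_S$. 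For $S$-divisible modules, the analogous generating set is $\{R/Rs : s\in S\}$, since "$sM=M$ for all $s\in S$" is exactly the condition that $\mathrm{Ext}$ or $\mathrm{Hom}$ behaves correctly against these cyclic quotients; more precisely $M$ is $S$-divisible iff the multiplication map $M\xrightarrow{s}M$ is surjective for each $s\in S$.

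Second, I would verify the two closure properties that make Enochs's theorem applicable. Closure under quotients (hence under epimorphic images) is already noted in the excerpt for $S$-$h$-divisible modules and is immediate for $S$-divisible modules since surjectivity of multiplication by $s$ passes to quotients. The key remaining point is closure under direct limits (equivalently, under arbitrary direct sums and direct limits of directed systems): a direct limit of modules on which multiplication by $s$ is surjective again has multiplication by $s$ surjective because direct limits are exact in $R\text{-Mod}$; and likewise a direct limit of epimorphic images of sums of $R_S$ is again such an image. Given these, one checks that $\mathscr{F}$ is a "covering class" — concretely, that it is precovering (the generating set supplies an epimorphism from an object of $\mathscr{F}$, assembled from the generators, onto any module, and in particular onto any $M$ we must cover) and that precovers can be trimmed to covers using the direct-limit closure via Enochs's lemma on the existence of covers for classes closed under direct limits and direct sums.

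Third, for the $S$-$h$-divisible case there is a cleaner route worth mentioning: by Proposition \ref{rinj-hd} (when $S$ is regular) the class of $S$-$h$-divisible modules coincides with the class of quotients of $S$-injective modules, and one can try to build the cover from an $S$-injective precover; but since the theorem is stated for an arbitrary multiplicative set $S$, not necessarily regular, I would not rely on this and instead run the uniform $\varinjlim$-closure argument for both classes simultaneously. The main obstacle I anticipate is purely set-theoretic bookkeeping: to apply the Enochs existence theorem one needs the generating set to be a genuine \emph{set} (not a proper class) and one needs a cardinality bound ensuring the precovers can be chosen from a set of representatives, so that the transfinite "tower" construction producing the cover terminates. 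For $S$-$h$-divisible modules the single generator $R_S$ makes this trivial; for $S$-divisible modules the set $\{R/Rs:s\in S\}$ is indexed by $S$, again a set, so the bound is controlled. After that, the verification that the resulting precover is minimal (every endomorphism $h$ with $f\circ h=f$ is an automorphism) is the standard consequence of the direct-limit closure and is routine. I would therefore present the proof as: (1) fix the generating set; (2) note closure under quotients and direct limits; (3) cite Enochs's theorem on covering classes to conclude the existence of $S$-$(h\text{-})$divisible covers.
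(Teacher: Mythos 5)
Your plan has a genuine gap at its central step: the claim that the class is precovering because ``the generating set supplies an epimorphism from an object of $\mathscr{F}$ \ldots onto any module $M$ we must cover.'' No such epimorphism exists in general: both classes are closed under quotients, so an epimorphic image of an $S$-divisible (resp.\ $S$-$h$-divisible) module is again $S$-divisible (resp.\ $S$-$h$-divisible); for instance no divisible abelian group maps onto $\mathbb{Z}$. A precover of $M$ need not be surjective, and the justification you give for the precovering property therefore fails. The auxiliary claim about the $S$-divisible case is also off: $\{R/Rs : s\in S\}$ is not a generating set for the class in the quotient sense (these modules are usually not even $S$-divisible); the condition $\Ext^1_R(R/sR,M)=0$ describes $S$-divisibility only for regular $s$, and in any case it exhibits the class as a right $\Ext$-orthogonal class, which is the machinery for (pre)envelopes, not covers. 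Finally, since the theorem is stated for an arbitrary multiplicative set, you cannot lean on regularity anywhere, as you correctly note for Proposition \ref{rinj-hd} but then implicitly reuse in the $\Ext$ description.

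The fix is much simpler than the Enochs machinery you invoke, and it is exactly what the paper does: because each class is closed under quotients and arbitrary sums, every module $M$ contains a largest $S$-divisible submodule $d(M)$ (the sum of all $S$-divisible submodules), and likewise $h_S(M)$ for the $S$-$h$-divisible case. The inclusion $i\colon d(M)\hookrightarrow M$ is a precover for the trivial reason that any $g\colon D\to M$ with $D$ in the class has image in the class, hence inside $d(M)$, so $g$ factors through $i$ by corestriction --- no lifting and no epimorphism onto $M$ is needed. And $i$ is automatically a cover: if $i\circ h=i$ then, $i$ being injective, $h$ is the identity on $d(M)$. This also dissolves your set-theoretic worries (submodules of $M$ form a set), and it makes the appeal to closure under direct limits and to Enochs's theorem unnecessary; your route could in principle be repaired by proving precovering via this same trace argument, but at that point the cover is already in hand.
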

\begin{proof} Let $M$ be an $R$-module. Set $d(M)$ be the sum of all $S$-divisible submodules of $M$. We claim the embedding map $i:d(M)\hookrightarrow M$ is the $S$-divisible cover of $M$. Obviously, $d(M)$ is also an $S$-divisible $R$-module. Let $D$ be an $S$-divisible $R$-module and $g:D\rightarrow M$ be an $R$-homomorphism. Consider the following diagram:
		$$\xymatrix@R=25pt@C=40pt{
		D\ar@{.>}[d]_{g'}\ar[rd]^{g}& \\
		d(M)\ar@{^{(}->}[r]^{i}&M. \\
	}$$
Since $g(D)$ is an $S$-divisible $R$-module, then $\Im(g)$ is a submodule of $d(M)$, and hence $g$ factor through $d(M)$. 	

Now, assume $h:d(M)\rightarrow d(M)$ is an $R$-homomorphism satisfying $i\circ h=i:$	
		$$\xymatrix@R=25pt@C=40pt{
	d(M)\ar@{.>}[d]_{h}\ar@{^{(}->}[rd]^{i}& \\
	d(M)\ar@{^{(}->}[r]^{i}&M. \\
}$$	
Then $h$ is a  monomorphism. Note $h$ is also an  epimorphism. Indeed, on contrary, assume $d\in d(M)-\Im(h)$. Then $d=i(d)=i(h(d))\in \Im(h),$ which is a contrary. Hence, $h$ is an automorphism. Consequently, 	$i$ is  an $S$-divisible  cover of $M$. 

The existence of $S$-$h$-divisible  covers can be obtained similarly.
\end{proof}

\section{$S$-strongly flat covers of $S$-($h$-)divisible modules}

	Let $R$ be a ring and $S$ a  multiplicative subset of $R$.  Recall from \cite{LS19} that
	an $R$-module $M$ is said to be 
	\begin{enumerate}
		\item \emph{$S$-weakly cotorsion},  if $\Ext_R^1(R_S,M)=0$;
		\item \emph{$S$-strongly flat} if $\Ext_R^1(M,N)=0$ for any $S$-weakly cotorsion module $N$. 
	\end{enumerate}

It follows by \cite[Lemma 1.2]{BP19} that an $R$-module $F$ is $S$-strongly flat if and only if it is a direct summand of an
$R$-module $G$ for which there exists an exact sequence of $R$-modules
$$0\rightarrow U\rightarrow G\rightarrow V \rightarrow 0$$
where $U$ is a free $R$-module and $V$ is a free $R_S$-module. 
 It follows by \cite[Theorem 5.27,Theorem 6.11]{gt} that every $R$-module has an $S\mbox{-}$weakly cotorsion envelope and a special $S\mbox{-}$strongly flat precover. It was proved in  \cite[Theorem 7.9]{BP19} that a ring $R$ is  $S$-almost perfect (i.e., $R_S$ is a perfect ring and $R/sR$ is a perfect ring for any $s\in S$) if and only if every $R$-module admits an  $S\mbox{-}$strongly flat cover.

It was proved in \cite[Theorem 3.1, Corollary 3.2]{BS02} that every $h$-divisible $R$-module admits a strongly flat  cover over all integral domains; and  every divisible  $R$-module admits a strongly flat cover if and only if the integral domain $R$ is Matlis. In general, a natural question is that 
\begin{question} When every $S$-$(h$-$)$divisible module admits an $S\mbox{-}$strongly flat cover?	
\end{question}
In the final of this section, we will ask this question when $R_S$ is a semisimple ring.

\begin{lemma}\label{dirsumad}\cite[Lemma 2]{K52} Let $M$ be any module, $T$ a submodule of $M$, $M/T$ a direct sum of modules $U_i$, and $T_i$ the inverse image in $M$ of $U_i$. Suppose $T$ is a direct summand of each $T_i$. Then $T$ is a direct summand of $M$.
\end{lemma}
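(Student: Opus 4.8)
The plan is to manufacture a complement of $T$ in $M$ directly out of the given complements sitting inside the $T_i$. Write $\pi\colon M\twoheadrightarrow M/T=\bigoplus_i U_i$ for the canonical surjection and identify each $U_i$ with the corresponding submodule of $M/T$, so that $T_i=\pi^{-1}(U_i)$ and $T=\ker\pi\subseteq T_i$ for every $i$ (and hence $\pi(T_i)=U_i$ since $\pi$ is onto). By hypothesis I may fix, for each $i$, a submodule $C_i\subseteq T_i$ with $T_i=T\oplus C_i$. I will show that $C:=\sum_i C_i$ is the desired complement, that is, $M=T\oplus C$.

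First I would verify $T+C=M$. Applying $\pi$ to $T_i=T\oplus C_i$ and using $T=\ker\pi$ gives $U_i=\pi(T_i)=\pi(C_i)$; summing over $i$ yields $M/T=\sum_i U_i=\pi\big(\sum_i C_i\big)=\pi(C)$, and since $\ker\pi=T$ this is exactly the statement $M=T+C$. Next I would verify $T\cap C=0$. Let $x\in T\cap C$ and write $x=c_{i_1}+\dots+c_{i_n}$ with distinct indices $i_1,\dots,i_n$ and $c_{i_k}\in C_{i_k}$. Since $x\in T=\ker\pi$, applying $\pi$ gives $0=\pi(c_{i_1})+\dots+\pi(c_{i_n})$ in $\bigoplus_i U_i$, where $\pi(c_{i_k})\in U_{i_k}$; the directness of the decomposition $M/T=\bigoplus_i U_i$ then forces $\pi(c_{i_k})=0$, i.e. $c_{i_k}\in T$, for each $k$. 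But $c_{i_k}\in C_{i_k}$ and $C_{i_k}\cap T=0$ because $T_{i_k}=T\oplus C_{i_k}$, so $c_{i_k}=0$ for every $k$ and hence $x=0$. Combining the two points gives $M=T\oplus C$, as claimed.

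I do not expect any genuine obstacle here: the argument is short and elementary (it is the classical lemma of Kaplansky). The only point requiring a moment's care is the verification of $T\cap C=0$, where one must genuinely invoke the internal directness of $M/T=\bigoplus_i U_i$ in order to conclude that each $\pi(c_{i_k})$ vanishes separately; the finiteness of the representation $x=\sum_k c_{i_k}$ costs nothing, since an element of the submodule $\sum_i C_i$ is by definition a finite sum of elements of the $C_i$. The same computation incidentally shows that $\sum_i C_i=\bigoplus_i C_i$, though this refinement is not needed for the statement.
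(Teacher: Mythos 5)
Your argument is correct and is exactly the classical one: the paper does not prove this lemma but cites Kaplansky's Lemma 2, whose proof is precisely your construction of the complement $C=\sum_i C_i$ from chosen complements $T_i=T\oplus C_i$, with $T+C=M$ read off from $\pi(C)=M/T$ and $T\cap C=0$ from the directness of $\bigoplus_i U_i$. No gaps; nothing further is needed.
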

\begin{proposition}\label{tor-sub-h-div}		Let $R$ be a ring and $S$ a regular multiplicative subset of $R$.  Suppose $R_S$ is a semisimple ring. Then the $S$-torsion submodule of $S$-$h$-divisible module is a summand.
\end{proposition}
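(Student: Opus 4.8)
The plan is the following. Let $M$ be an $S$-$h$-divisible $R$-module and let $T=t_S(M)$ be its $S$-torsion submodule, so that $M/T$ is $S$-torsion-free. Since $M$ is $S$-$h$-divisible, there is a free $R_S$-module $F=R_S^{(\kappa)}$ together with an $R$-epimorphism $\pi\colon F\twoheadrightarrow M$ (this is the characterization recalled just before Proposition \ref{rinj-hd}). Put $K=\Ker\pi$ and $L=\pi^{-1}(T)$. Then $K\subseteq L$, the isomorphism $F/K\cong M$ induced by $\pi$ carries $L/K$ onto $\pi(L)=T$, and $\pi$ induces an isomorphism $F/L\cong M/T$.

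The key step I would carry out first is that $L$ is an $R_S$-submodule of $F$. The point is that $M$ itself need not be an $R_S$-module (it is $S$-divisible but possibly not $S$-torsion-free), so $\pi$ is only $R$-linear and one cannot bring the semisimplicity of $R_S$ to bear on $M$ directly; passing to the preimage $L$ is exactly what repairs this. Concretely, $F$ is an $R_S$-module and $F/L\cong M/T$ is $S$-torsion-free, so for $x\in L$ and $s\in S$ the class of $s^{-1}x$ in $F/L$ is killed by $s$, hence (as $F/L$ is $S$-torsion-free) is zero, i.e. $s^{-1}x\in L$; thus $L$ is stable under the $R_S$-action. Since $R_S$ is semisimple, the $R_S$-submodule $L$ of the $R_S$-module $F$ is a direct summand, say $F=L\oplus F'$ with $F'$ an $R_S$-submodule (in particular an $R$-submodule) of $F$.

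Finally I would push this splitting down to $M$. Since $K\subseteq L$ and $L\cap F'=0$, the modular law yields $F/K=L/K\oplus (F'+K)/K$ as $R$-modules, with $(F'+K)/K\cong F'\cong F/L\cong M/T$. Transporting along $F/K\cong M$, the summand $L/K$ becomes $T$; hence $T$ is a direct summand of $M$, with complement isomorphic to $M/T$, which proves the proposition. The only step that requires an idea rather than bookkeeping is establishing that $L=\pi^{-1}(T)$ is $R_S$-stable; everything after that is formal once semisimplicity of $R_S$ is available on the $R_S$-module $F$. Alternatively, one can route the argument through Lemma \ref{dirsumad}: $M/T$ is $S$-torsion-free and $S$-$h$-divisible, hence an $R_S$-module, hence a direct sum $\bigoplus_i U_i$ of simple $R_S$-modules by semisimplicity of $R_S$; writing $T_i$ for the preimage of $U_i$ in $M$, it then suffices by Lemma \ref{dirsumad} to split each short exact sequence $0\to T\to T_i\to U_i\to 0$, and the preimage computation above does exactly this for each $i$. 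I would present the first, more direct version.
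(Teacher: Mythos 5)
Your proof is correct, and it takes a genuinely different route from the paper's. The paper argues downstairs in $M$: it uses that the commutative semisimple ring $R_S$ is a finite product of fields $F_i$, decomposes the $R_S$-module $M/M_T$ as a direct sum of one-dimensional $F_i$-spaces $F_{i,j}$, pulls these back to submodules $S_{i,j}\subseteq M$, reduces via Kaplansky's Lemma \ref{dirsumad} to splitting $M_T$ inside each $S_{i,j}$, and then produces an explicit complement $f(T_{i,j})$ by an element-chasing argument that uses the splitting of the composite $R_S^{(\kappa)}\twoheadrightarrow M\twoheadrightarrow M/M_T$. You instead work upstairs in the free $R_S$-module $F=R_S^{(\kappa)}$: the observation that $L=\pi^{-1}(T)$ is $R_S$-stable (because $F/L\cong M/T$ is $S$-torsion-free, so $s^{-1}x$ lands in $L$ for $x\in L$, $s\in S$) lets you apply semisimplicity of $R_S$ exactly once, to split $F=L\oplus F'$, and then the modular law with $K=\Ker\pi\subseteq L$ descends the splitting to $M$. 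This buys several things: it bypasses the structure theory of commutative semisimple rings, Kaplansky's lemma, and the element-wise verification; it isolates the only real use of the hypothesis (submodules of $R_S$-modules split); and, unlike the paper's argument, it never invokes the fact that an $S$-torsion-free $S$-divisible module is an $R_S$-module, so it does not appear to use regularity of $S$ at all and hence proves a slightly more general statement. All the individual steps check out: $F'\cap K\subseteq F'\cap L=0$, $L\cap(F'+K)=K$ by modularity, and $\pi$ carries $L/K$ onto $T$, so $T$ is indeed a summand with complement isomorphic to $M/T$.
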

\begin{proof} Let $M$ be an $S$-$h$-divisible module, and  $M_T$ be the torsion submodule of $M$. Then there is an epimorphism $f:R_S^{(\kappa)}\twoheadrightarrow M$, and $M/M_T$ is an $S$-torsion-free $S$-divisible $R$-module, and so is an $R_S$-module by \cite[Proposition 2.2]{Z24-matlisloc}. Since $R_S$ is a semisimple ring, $R_S\cong \bigoplus\limits^n_{i=1} F_i$  where each  $F_i$ is a field. And so $M/M_T\cong \bigoplus\limits^n_{i=1} (\bigoplus\limits_{j\in \kappa_i}F_{i,j})$ with each $F_{i,j}=F_i$ ($i=1,\dots,n$). 
Let $S_{i,j}$ be the inverse image of $F_{i,j}$ under the canonical map $\pi: M \twoheadrightarrow M/M_T$. It follows by  Lemma \ref{dirsumad} that we only need to show $M_T$ is a direct summand of each $S_{i,j}$.
	
	Let $y \in S_{i,j} - M_T$. Then $Ry$ is an $S$-torsionfree submodule of $S_{i,j}$. Since $$\Hom_R(R_S^{(\kappa)},M/M_T)=\Hom_{R_S}(R_S^{(\kappa)},M/M_T),$$ the $R$-homomorphism $\pi\circ f$ splits. And so one can choose $x \in \bigoplus\limits_{j\in \kappa_i}F_{i,j}\subseteq R_S^{(\kappa)}$ such that $f(x) = y$. Since $\bigoplus\limits_{j\in \kappa_i}F_{i,j}$ is a vector space over $F_i$, there exists an $R$-submodule $T_{i,j}$ of $\bigoplus\limits_{j\in \kappa_i}F_{i,j}$ such that $T_{i,j} \cong F_i$ and $x \in T_{i,j}$. Let $u$ be a nonzero element  in $T_{i,j}$. Then there exist $r,s\in S$ such that $ru = sx \neq 0$. Since $rf(u) = sf(x) = sy \neq 0$, we have $f(u) \neq 0$, and thus $f(T_{i,j}) \cong T_{i,j} \cong F_i$.	Since $M = \sum\limits_{i=1}^n(\sum\limits_{j\in\kappa_i} S_{i,j})$, we have $f(u) = w + z$, where $w \in \sum\limits_{(i',j') \neq (i,j)} S_{i',j'}$ and $z \in S_{i,j}$. Hence $rw + rz = rf(u) = sy \in S_{i,j}$. Thus $rw \in \sum\limits_{(i',j') \neq (i,j)} S_{i',j'} \cap S_{i,j} = M_T$. Therefore, $w \in M_T \subset S_{i,j} $, and so $f(u) \in S_{i,j} $. This shows that $f(T_{i,j} ) \subset S_{i,j} $. Since $f(T_{i,j} ) \cong F_i$, we have $f(T_{i,j}) \cap M_T = 0$ and $f(T_{i,j})$ maps onto $F_i$ under the canonical map $\pi:M \twoheadrightarrow M/M_T$. Thus $S_{i,j} = M_T \oplus f(T_{i,j})$. Consequently, $M_T$ is a direct summand of $M$.
\end{proof}
Note that the above Proposition \ref{tor-sub-h-div} may be incorrect when $R_S$ is not semisimple.
\begin{example}
	Let $R:=\mathbb{Z}(+)\mathbb{Q}$ be the trivial extension of $\mathbb{Z}$ with $\mathbb{Q}$, and $S$ the set of all non-zero-divisors of $R$. Then the  ring of quotients of $R$ is $Q=\mathbb{Q}(+)\mathbb{Q}$. So its quotient $M=\mathbb{Q}(+)\mathbb{Q}/0(+)\mathbb{Z}\cong\mathbb{Q}(+)\mathbb{Q}/\mathbb{Z}$  is $h$-divisible. Note that the torsion submodule of $M$ is $T=0(+)\mathbb{Q}/\mathbb{Z}$. Claim that $T$ is not a direct summand of $M$. Indeed, on contrary, suppose $\pi$ is the retraction of the embedding $i:T\rightarrow M.$  Then $\pi((a,\frac{y}{x}+\mathbb{Z}))=(0,\frac{y}{x}+\mathbb{Z})$ for any $(a,\frac{y}{x}+\mathbb{Z})\in T$. However, $\pi((b,\frac{t}{x})(a,\frac{y}{x}+\mathbb{Z}))\not=(b,\frac{t}{x})\pi((a,\frac{y}{x}+\mathbb{Z}))$ when $a\not=1.$ Hence $\pi$ is not an $R$-homomorphism, which is a contradiction. 
\end{example}
\begin{lemma}\label{torsion}
Suppose $T$ is an $S$-torsion $R$-module. Then $\Hom_R(T, M)$ is  $S$-weakly cotorsion and $S$-$h$-reduced for every $R$-module $M$.
\end{lemma}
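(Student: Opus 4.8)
The claim has two parts: for an $S$-torsion module $T$ and an arbitrary $R$-module $M$, the Hom-module $H := \Hom_R(T,M)$ is (i) $S$-$h$-reduced, i.e. $\Hom_R(R_S,H)=0$, and (ii) $S$-weakly cotorsion, i.e. $\Ext_R^1(R_S,H)=0$. The plan is to reduce both statements to the vanishing of Hom and Ext from $R_S$ against an $S$-torsion argument, using the Hom-$\otimes$ adjunction. First I would record the key mechanism: by adjunction $\Hom_R(R_S, \Hom_R(T,M)) \cong \Hom_R(R_S \otimes_R T, M)$, and since $T$ is $S$-torsion while $R_S$ is $S$-divisible (indeed an $R_S$-module), the module $R_S \otimes_R T$ vanishes — every simple tensor $\tfrac{r}{s}\otimes t$ can be rewritten, choosing $s'\in S$ with $s't=0$, as $\tfrac{r}{ss'}\otimes s't = 0$. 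Hence part (i) follows immediately.

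For part (ii), the cleanest route is to apply $\Hom_R(-,M)$ to a presentation of $R_S$ as an $R$-module and combine with the adjunction. Concretely, I would take a short exact sequence $0 \to K \to F \to R_S \to 0$ with $F$ free, tensor it with $T$ to get the exact sequence $K\otimes_R T \to F\otimes_R T \to R_S\otimes_R T \to 0$, and use $R_S\otimes_R T = 0$ together with the long exact sequence for $\Tor$. Alternatively — and this is what I expect to be the technically smoothest path — I would use the derived adjunction isomorphism $\Ext_R^1(R_S, \Hom_R(T,M)) \cong \Ext_R^1(R_S\otimes_R T, M)$, which is valid here because $T$ (hence any quotient in a projective resolution computation) interacts well with the flatness/torsion bookkeeping; since $R_S\otimes_R T = 0$, the right-hand side is $0$. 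If one is uneasy about invoking a general adjunction spectral-sequence statement, the honest substitute is: resolve $R_S$ by $0\to K \to F \to R_S\to 0$ with $F$ free, apply $\Hom_R(-,H)$, and observe $\Hom_R(F,H) \to \Hom_R(K,H)$ is surjective because, after the adjunction, this map is $\Hom_R(F\otimes_R T, M)\to \Hom_R(K\otimes_R T,M)$ and the inclusion $K\otimes_R T \hookrightarrow F\otimes_R T$ is actually an isomorphism (its cokernel is $R_S\otimes_R T=0$ and, since $F$ is flat, $\Tor_1^R(R_S,T)=0$ forces injectivity too), whence the induced Hom map is an isomorphism, a fortiori surjective; therefore $\Ext_R^1(R_S,H)=0$.

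The main obstacle is the bookkeeping in part (ii): making sure the natural map between the two exact sequences (the $\Hom_R(-,H)$ sequence on $0\to K\to F\to R_S\to 0$ and the $\Hom_R(-,M)$ sequence on $0\to K\otimes_R T\to F\otimes_R T\to 0$) is genuinely compatible via the tensor-hom adjunction, so that surjectivity of one transports to the other. The point that does the work is that $F$ being free (hence flat) gives $\Tor_1^R(R_S,T)=0$, which upgrades the right-exact sequence $K\otimes_R T\to F\otimes_R T\to R_S\otimes_R T\to 0$ to a short exact one and thus identifies $K\otimes_R T$ with $F\otimes_R T$ once we know $R_S\otimes_R T=0$. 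Everything else is a routine diagram chase, and no hypothesis on $S$ beyond being multiplicative is needed — in particular regularity of $S$ is not required for this lemma.
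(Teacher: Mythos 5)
Your argument is correct and takes essentially the same route as the paper: part (i) is the identical adjunction computation from $R_S\otimes_R T=0$, and your free-presentation argument for part (ii) just reproves, in the case at hand, the comparison monomorphism $\Ext_R^1(R_S,\Hom_R(T,M))\to\Ext_R^1(R_S\otimes_R T,M)$ that the paper imports from \cite[Lemma 2.2]{FL09}, using the same two inputs $R_S\otimes_R T=0$ and $\Tor_1^R(R_S,T)=0$. The only slip is attributing $\Tor_1^R(R_S,T)=0$ to the freeness of $F$: that vanishing comes from the flatness of the localization $R_S$ over $R$ (freeness of $F$ only gives $\Tor_1^R(F,T)=0$ and identifies $\Tor_1^R(R_S,T)$ with $\ker(K\otimes_R T\to F\otimes_R T)$), after which your diagram chase goes through as written.
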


\begin{proof} Suppose  $T$ is an  $S$-torsion $R$-module.Then $R_S \otimes_R T=0.$
 It follows by $$\Hom_R(R_S, \Hom_R(T, M)) \cong \Hom_R(R_S \otimes_R T, M)=0$$ that  $\Hom_R(T, M)$ is $S$-$h$-reduced. Since $\Tor_R(R_S,T)=0$, it follows by \cite[Lemma 2.2]{FL09} that the natural homomorphism $\Ext_R^1(R_S,\Hom_R(T, M))\rightarrow \Ext_R^1(R_S\otimes_RT, M)=0$ is a monomorphism. Hence $\Ext_R^1(R_S,\Hom_R(T, M))=0,$ that is, $\Hom_R(T, M)$ is $S$-weakly cotorsion.
\end{proof}

\begin{lemma} \label{ssfpre}
 Let
	\[
	0 \longrightarrow C \hookrightarrow M \xrightarrow{f} A \longrightarrow 0
	\]
	be a special $S$-strongly flat-precover of $A$ and let $h$ be an endomorphism of $M$ such that $f = f\circ h$. Then $h(M) + C = M$, $\operatorname{Ker} h \leqslant C$ and $C \cap h(M) = h(C)$.
\end{lemma}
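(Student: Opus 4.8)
The plan is to exploit the defining property of a \emph{cover} (the automorphism condition built into a special precover via the standard Wakamatsu-type argument is not available here, but the conclusions asked for are exactly the three facts that fall out of ``$f=f\circ h$'' together with exactness of the displayed sequence). First I would establish $h(M)+C=M$. Since $C=\Ker f$, the hypothesis $f=f\circ h$ gives, for every $m\in M$, that $f(m)=f(h(m))$, hence $f(m-h(m))=0$, i.e. $m-h(m)\in C$. Thus $m=h(m)+(m-h(m))\in h(M)+C$, proving the first claim. This step is essentially immediate.

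Next I would show $\Ker h\leqslant C$. Here the point is that $0\to C\hookrightarrow M\xrightarrow{f} A\to 0$ is a \emph{special $S$-strongly flat precover}, so $M$ is $S$-strongly flat and $C$ is $S$-weakly cotorsion (that is what ``special'' means for this cotorsion pair). Take $x\in\Ker h$. Then $f(x)=f(h(x))=f(0)=0$, so $x\in\Ker f=C$; so in fact $\Ker h\leqslant C$ holds for the trivial reason that $f\circ h=f$ forces $h$ to be injective modulo $C$. Wait — more carefully: $x\in\Ker h$ means $h(x)=0$, and then $f(x)=(f\circ h)(x)=f(0)=0$, hence $x\in C$. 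So this is again a one-line consequence of $f=f\circ h$ and $C=\Ker f$, with no need to invoke specialness at all.

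For the third claim, $C\cap h(M)=h(C)$, one inclusion is formal: $h(C)\subseteq h(M)$ always, and $h(C)\subseteq C$ because if $c\in C=\Ker f$ then $f(h(c))=f(c)=0$, so $h(c)\in C$; hence $h(C)\subseteq C\cap h(M)$. For the reverse inclusion, let $y\in C\cap h(M)$, say $y=h(m)$ with $m\in M$ and $y\in C$. Using the first claim write $m=h(m')+c$ with $m'\in M$, $c\in C$; then $h(m)=h^2(m')+h(c)$. I would like to iterate or to identify $h(m)$ with an element of $h(C)$. The cleaner route: since $y=h(m)\in C$, we have $f(m)=f(h(m))=f(y)=0$ (as $y\in C=\Ker f$), so $m\in\Ker f=C$, whence $y=h(m)\in h(C)$. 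So again this reduces to the kernel bookkeeping.

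In summary, all three assertions follow mechanically from the two facts $C=\Ker f$ and $f=f\circ h$, by chasing which elements $f$ kills; the ``special $S$-strongly flat precover'' hypothesis is not logically needed for this particular lemma (it will presumably be used in the sequel, e.g. to upgrade a precover to a cover). The only mild subtlety — and the place I would double-check — is the direction $C\cap h(M)\subseteq h(C)$, where one must be careful to feed the element of $h(M)$, rather than its preimage, into $f$; once one observes $f(h(m))=f(m)$, the preimage $m$ itself lies in $C$ and the inclusion closes. I anticipate no real obstacle here: the lemma is purely diagram-chasing, and the writing effort is just in phrasing the three short element arguments cleanly.
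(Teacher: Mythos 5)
Your proof is correct. The paper itself does not argue this lemma at all: it simply cites Lemma~3.3 of Bazzoni--Positselski (reference \cite{BP19}), so your contribution is a self-contained verification rather than a different mathematical route. Your three element chases are all valid: $m-h(m)\in\Ker f=C$ gives $h(M)+C=M$; $h(x)=0$ forces $f(x)=f(h(x))=0$, giving $\Ker h\leqslant C$; and for $C\cap h(M)\subseteq h(C)$ you correctly feed the preimage through $f\circ h=f$ to see that any $m$ with $h(m)\in C$ already lies in $C$, while $h(C)\subseteq C\cap h(M)$ is immediate since $f(h(c))=f(c)=0$. Your side remark is also accurate and worth keeping in mind: the conclusions use only the exactness of the sequence (i.e.\ $C=\Ker f$) and the relation $f=f\circ h$; the ``special $S$-strongly flat'' hypothesis plays no logical role in this lemma and is only exploited later (in Theorem~\ref{main1}), where one needs $M$ to be $S$-strongly flat and $C$ to be $S$-weakly cotorsion and $S$-$h$-reduced.
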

\begin{proof}
	It follows by \cite[Lemma 3.3]{BP19}.
\end{proof}

It was proved in \cite[Theorem 3.1]{BS02} that every $h$-divisible $R$-module admits a strongly flat  cover over all integral domains.
\begin{theorem}\label{main1}
	Let $R$ be a ring and $S$ a regular multiplicative subset of $R$.  Suppose $R_S$ is a semisimple ring. Then every $S$-$h$-divisible module admits an $S$-strongly flat cover.
\end{theorem}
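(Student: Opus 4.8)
Let me think about how to mirror the classical Bazzoni–Salce argument in this $S$-setting. The goal is to take an arbitrary $S$-$h$-divisible module $A$ and upgrade a special $S$-strongly flat precover (which exists by \cite[Theorem 6.11]{gt}, as recalled) into a genuine cover. The standard strategy is to first reduce to the case where $A$ is $S$-torsion: indeed, write $A_T$ for the $S$-torsion submodule of $A$; since $A$ is $S$-$h$-divisible and $R_S$ is semisimple, Proposition \ref{tor-sub-h-div} gives $A = A_T \oplus A'$ with $A'$ an $S$-torsion-free $S$-divisible module, hence an $R_S$-module, hence $S$-strongly flat (even a direct summand of a free $R_S$-module). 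Since a direct sum of covers is a cover, and $A'$ trivially has the identity as its $S$-strongly flat cover, it suffices to produce an $S$-strongly flat cover of the $S$-torsion $S$-$h$-divisible module $A_T$. So from now on I assume $A$ is $S$-torsion.

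Next I would start from a special $S$-strongly flat precover $0 \to C \hookrightarrow M \xrightarrow{f} A \to 0$, where $C$ is $S$-weakly cotorsion. The aim is to shrink $M$ to a minimal summand on which $f$ restricts to a cover, using the usual ``kernel of an idempotent'' trick. Given any endomorphism $h$ of $M$ with $f = f \circ h$, Lemma \ref{ssfpre} tells us $h(M) + C = M$, $\Ker h \leqslant C$, and $C \cap h(M) = h(C)$. The key observation — this is where the $S$-torsion hypothesis is used — is that such an $h$ must be \emph{surjective}, or at least that $M/h(M)$ is controlled. Since $A$ is $S$-torsion, $\Hom_R(A, M/h(M))$ is $S$-weakly cotorsion and $S$-$h$-reduced by Lemma \ref{torsion}; because $M$ is $S$-strongly flat and $C$ is $S$-weakly cotorsion, one analyzes the induced maps to conclude that the ``non-surjectivity'' of $h$ lives in a part that can be split off. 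More precisely, following Bazzoni–Salce, one shows that $C$ decomposes (after possibly replacing $M$) so that the part of $M$ not reached by $h$ is an $S$-strongly flat summand mapping to $0$ in $A$, which can be discarded; iterating/transfinitely pruning yields an $S$-strongly flat module $F$ with a surjection $F \twoheadrightarrow A$ whose kernel is $S$-weakly cotorsion and on which every compatible endomorphism is an automorphism. That last module is the desired $S$-strongly flat cover.

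The concrete mechanism I expect to use for the pruning step: among all special $S$-strongly flat precovers of $A$, choose one, say $f: M \to A$ with kernel $C$, for which $C$ is as small as possible in a suitable sense (e.g. minimize using the fact that $C$ is $S$-weakly cotorsion and that $S$-weakly cotorsion modules have good closure properties), or alternatively use Wakamatsu's lemma style reasoning together with Lemma \ref{ssfpre} to show directly that for the chosen precover every $h$ with $f = f\circ h$ is an automorphism. The surjectivity of $h$ follows from $h(M) + C = M$ once one knows $M/h(M)$ is $S$-strongly flat and maps onto a quotient of $A$ that must vanish; injectivity then follows because $\Ker h \leqslant C$ and a compatible endomorphism of a sufficiently reduced precover cannot have nonzero kernel — here one again invokes that $\Ker h$, being a submodule of the $S$-weakly cotorsion $C$ on which $f$ vanishes, is squeezed to zero by the minimality/reducedness of the chosen $M$.

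The main obstacle, I expect, is the pruning/minimality argument that turns a special precover into a cover: establishing that one can choose $M$ so that every endomorphism $h$ with $f = f\circ h$ is simultaneously surjective and injective. Surjectivity is the delicate half, and it is exactly where the hypothesis that $A$ is $S$-torsion (hence $\Hom_R(A,-)$ lands in $S$-weakly cotorsion, $S$-$h$-reduced modules, Lemma \ref{torsion}) combines with the structural description of $S$-strongly flat modules (free $R$-module extended by free $R_S$-module) and the semisimplicity of $R_S$ (via Proposition \ref{tor-sub-h-div}) to force $M/h(M)$ to split off as a trivial summand. Once surjectivity is in hand, injectivity and the automorphism conclusion are comparatively routine consequences of Lemma \ref{ssfpre}. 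I would organize the write-up as: (1) reduce to $A$ $S$-torsion; (2) fix a special $S$-strongly flat precover and record the consequences of Lemma \ref{ssfpre}; (3) prove every compatible endomorphism is surjective using Lemma \ref{torsion}; (4) prune to kill $\Ker h$ and conclude the cover property; mirroring \cite[Theorem 3.1]{BS02} at each step.
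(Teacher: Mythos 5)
Your reduction to the $S$-torsion case via Proposition \ref{tor-sub-h-div} and \cite[Proposition 5.5.4]{EJ11} matches the paper, but the core of the argument is missing. You start from an \emph{arbitrary} special $S$-strongly flat precover supplied by \cite[Theorem 6.11]{gt} and then gesture at a ``pruning/minimality'' or transfinite-discarding scheme to make every compatible endomorphism an automorphism; you yourself flag this as the main obstacle, and no mechanism is actually given for it. There is no reason an arbitrary special precover can be ``shrunk'' to a cover in this way, and your one concrete input at that point --- applying Lemma \ref{torsion} to $\Hom_R(A, M/h(M))$ --- does not engage with the problem: nothing identifies $M/h(M)$ or relates that Hom-module to the endomorphism $h$.

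What the paper does instead, and what your proposal lacks, is the choice of a \emph{specific} precover for the $S$-torsion $S$-$h$-divisible module $D$: the evaluation map $f:\Hom_R(R_S,D)\twoheadrightarrow D$, whose kernel is $C=\Hom_R(K,D)$ with $K=R_S/R$. Lemma \ref{torsion} is meant to be applied with the $S$-torsion module $T=K$, giving that $C$ is both $S$-weakly cotorsion \emph{and} $S$-$h$-reduced; the middle term $M=\Hom_R(R_S,D)$ is $S$-torsion-free and $S$-divisible, hence an $R_S$-module, hence $S$-strongly flat because $R_S$ is semisimple. With this precover the cover property is proved directly: if $f\circ h=f$, then $\Ker h$ is an $S$-torsion-free $S$-divisible submodule of the $S$-$h$-reduced module $C$ (Lemma \ref{ssfpre}), so $\Ker h=0$; semisimplicity of $R_S$ splits $M=h(M)\oplus M_1$; and applying $\Hom_R(R_S,-)$ to $0\to h(M)\cap C\to C\to M_1\to 0$, where $h(M)\cap C=h(C)\cong C$ is $S$-weakly cotorsion, forces $M_1\cong\Hom_R(R_S,M_1)=0$, so $h$ is an automorphism. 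Without this explicit construction (or an actual proof of your pruning step, which would likely amount to rediscovering it), your argument does not close.
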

\begin{proof}
It follows by Proposition  \ref{tor-sub-h-div} that the $S$-torsion submodule of an $S$-$h$-divisible module splits. So, by \cite[Proposition 5.5.4]{EJ11}, we only need to consider the  $S$-torsion $S$-$h$-divisible module cases. Let $D$ be an $S$-torsion $S$-$h$-divisible module. Then $h_S(D)=D$, and so there exists an exact sequence
$$	0 \longrightarrow \operatorname{Hom}_R(K, D) \longrightarrow \operatorname{Hom}_R(R_S, D) \xrightarrow{f} D \longrightarrow 0,
$$
where $K=R_S/R.$	Denote by $C$ the first term and by $M$ the middle term. Then $M$ is $S$-torsion-free and $S$-divisible, so is an $R_S$-module, and so hence $S$-strongly flat as $R_S$ is semisimple. It follows by Lemma \ref{torsion} that $C$ is $S$-weakly cotorsion and $S$-$h$-reduced. Hence $f$ is a special $S$-strongly flat precover of $D$.

 Moreover, we will show  that  $f$ is an $S$-strongly flat cover of $D$. Indeed, let $h$ be an endomorphism of $M$ such that $f\circ h = f$. We have to show that $h$ is an automorphism of $M$. Note that $\Ker(h) = 0$. Indeed, $\Ker(h)$ is $S$-torsion-free and $S$-divisible as $M$ is an $R_S$-module. And by Lemma \ref{ssfpre}, $\Ker(h) \subseteq C$ which is $S$-$h$-reduced. Hence $\Ker(h) = 0$.  The image of $h$ is $S$-torsion-free and $S$-divisible. Thus $M = h(M) \oplus M_1$ for some $S$-torsion-free $S$-divisible submodule $M_1$. Let $\pi$ be the projection of $M$ onto the summand $M_1$. By Lemma \ref{ssfpre}, it follows that the restriction of $\pi$ to $C$ is surjective and that its kernel $h(M) \cap C = h(C)$ is $S$-weakly cotorsion, since it is isomorphic to $C$. Thus we can consider the exact sequence
$$
	0 \longrightarrow h(M) \cap C \longrightarrow C \xrightarrow{\pi|_C} M_1 \longrightarrow 0.
$$
	Applying the functor $\operatorname{Hom}_R(R_S, -)$ we obtain the exact sequence
$$
	0 \longrightarrow \operatorname{Hom}_R(R_S, M_1) \cong M_1 \longrightarrow \operatorname{Ext}^1_R(R_S, h(M) \cap C) = 0
$$
	where the left $\operatorname{Ext}$ vanishes, since as we noted $h(M) \cap C$ is $S$-weakly cotorsion. Thus we conclude that $M_1 = 0$ and $h(M) = M$, hence $h$ is an automorphism of $M$.
\end{proof}

\begin{remark}\label{remk1}
	Note that the condition that $R_S$ is semisimple can not be removed in Theorem \ref{main1}. Indeed, let $R$ be a non-semisimple total ring of quotients, and $S$ be the set of all non-zero-divisors of $R$. Then all $R$-modules are $S$-$h$-divisible modules; and $S$-strongly flat modules are exactly projective modules.  However,  every $R$-module  does not admit a projective cover over non-semisimple rings.
\end{remark}

\begin{lemma}\label{s-d-ssf} 	Let $R$ be a ring and $S$ a regular multiplicative subset of $R$.
Then every $S$-divisible $S$-strongly flat $R$-module is $S$-$h$-divisible.
\end{lemma}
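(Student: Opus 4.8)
The statement asks: if $S$ is a regular multiplicative subset of $R$ and $M$ is an $S$-divisible, $S$-strongly flat $R$-module, then $M$ is $S$-$h$-divisible. The plan is to use the structural characterization of $S$-strongly flat modules recalled right before Lemma \ref{ssfpre}, namely that $M$ is a direct summand of a module $G$ fitting into a short exact sequence $0\to U\to G\to V\to 0$ with $U$ a free $R$-module and $V$ a free $R_S$-module. Since direct summands and quotients of $S$-$h$-divisible modules are $S$-$h$-divisible (the class is closed under quotients, hence under direct summands), it suffices to show that $G$ itself is $S$-$h$-divisible, and for that it suffices to handle the two ends of the sequence together with the $S$-divisibility hypothesis.

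The key observation is the following. Since $M$ is $S$-divisible and $M$ is a direct summand of $G$, we want to transfer $S$-divisibility upward or, better, argue directly on $M$. Here is the cleaner route: consider $h_S(M)\subseteq M$, the largest $S$-$h$-divisible submodule; the quotient $M/h_S(M)$ is then $S$-$h$-reduced, i.e. $\Hom_R(R_S, M/h_S(M))=0$. I claim $M/h_S(M)$ is still $S$-divisible: indeed $S$-divisible modules are closed under quotients (if $sM=M$ then $s(M/N)=M/N$), so $M/h_S(M)$ is $S$-divisible. Now I want to show an $S$-divisible, $S$-$h$-reduced, $S$-strongly flat module must be zero. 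The module $M/h_S(M)$ is a quotient of $M$, but $S$-strongly flat modules are not closed under quotients in general, so instead I will use that $M$ special-precovers — more precisely, I work with the structural description: write $M\oplus M'\cong G$ with $0\to U\to G\xrightarrow{p} V\to 0$, $U$ free over $R$ and $V$ free over $R_S$. Applying $\Hom_R(R_S,-)$ and using that $\Ext_R^1(R_S,U)$ controls the obstruction, together with the fact that $V$ is an $R_S$-module hence $S$-$h$-divisible (being free over $R_S$, it is a quotient — indeed a summand — of a free $R_S$-module, so $h_S(V)=V$), I get that $h_S(G)\supseteq$ the preimage of $V$ modulo something, and more to the point $G/h_S(G)$ embeds into a quotient that kills the $R_S$-part. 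The precise computation: since $p(h_S(G))$ is an $S$-$h$-divisible submodule of $V=h_S(V)$ and actually $p$ restricted to $\Hom_R(R_S,G)\to\Hom_R(R_S,V)$ need not be surjective — this is exactly where $\Ext_R^1(R_S,U)$ enters — I instead observe $U$ is free, so $U\subseteq h_S(G)+$?; rather, note $U$ need not be $S$-$h$-divisible at all.

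So let me restructure the decisive step: I will prove that an $S$-divisible $S$-strongly flat module $M$ satisfies $M=h_S(M)$ by showing $M/h_S(M)=0$. Set $N=M/h_S(M)$, which is $S$-divisible and $S$-$h$-reduced. Pick the special $S$-strongly flat precover structure is not available for $N$; instead use: $N$ is $S$-divisible, so for the purposes of this lemma it suffices to show that an $S$-divisible $S$-$h$-reduced module which is a homomorphic image of an $S$-strongly flat module must vanish — and here I use that $N$ being $S$-divisible means $sN=N$ for all $s\in S$, so $R/sR\otimes_R N=0$; combined with $R_S\otimes_R N$: since $N$ is a quotient of the $S$-strongly flat $M$, write $0\to L\to M\to N\to 0$. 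Apply $R_S\otimes_R-$: $R_S\otimes_R M\to R_S\otimes_R N\to 0$. Now $R_S\otimes_R N\cong N$ would hold if $N$ were $S$-torsion-free, but we do not know that; however, the natural map $N\to R_S\otimes_R N$ has kernel the $S$-torsion submodule, and... The genuinely hard part — the one obstacle I expect to fight — is controlling $S$-torsion: an $S$-divisible module can have large $S$-torsion (e.g. $\mathbb{Q}/\mathbb{Z}$ over $\mathbb{Z}$), and one must rule this out for $S$-strongly flat $M$. The resolution should be that $S$-strongly flat modules have projective dimension behavior forcing: from $0\to U\to G\to V\to 0$ with $U,V$ torsion-free ($U$ free over $R$ hence $S$-torsion-free since $S$ is regular, $V$ an $R_S$-module hence $S$-torsion-free), $G$ is $S$-torsion-free, so $M$ is $S$-torsion-free. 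Then $N=M/h_S(M)$: is $h_S(M)$ "pure enough" that $N$ is still $S$-torsion-free? Since $h_S(M)=\sum\Im(f)$ over $f\in\Hom_R(R_S,M)$ and each such image is an $R_S$-module... wait, $\Im(f)$ for $f:R_S\to M$ is a quotient of $R_S$ hence an $R_S$-module, hence $S$-divisible, and the sum of $S$-divisible submodules is $S$-divisible, but we need: $h_S(M)$ is a pure submodule so that $M/h_S(M)$ is $S$-torsion-free. Granting that, $N$ is $S$-torsion-free, $S$-divisible, hence an $R_S$-module by \cite[Proposition 2.2]{Z24-matlisloc}, hence $S$-$h$-divisible by Proposition \ref{rinj-hd} (or directly, an $R_S$-module is $S$-$h$-divisible). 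But $N$ is also $S$-$h$-reduced, so $N=h_S(N)=0$, i.e. $M=h_S(M)$, completing the proof. Thus the whole argument reduces to the single lemma "$h_S(M)$ is a pure submodule of an $S$-torsion-free module $M$," and I would close the proof with: $M$ is $S$-torsion-free by the structural description, $M/h_S(M)$ is $S$-torsion-free and $S$-divisible hence an $R_S$-module hence $S$-$h$-divisible and $S$-$h$-reduced hence zero.
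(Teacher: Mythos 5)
Your final argument has exactly one unproven step, and you flag it yourself: the claim that $h_S(M)$ sits inside the $S$-torsion-free module $M$ in such a way that $N=M/h_S(M)$ is again $S$-torsion-free is left as ``granting that''. That claim is true and takes one line: $h_S(M)$ is a sum of images of $R_S$, hence an $S$-divisible submodule of $M$; if $s\bar m=0$ in $M/h_S(M)$ with $s\in S$, then $sm\in h_S(M)$, so $sm=sd$ for some $d\in h_S(M)$ by $S$-divisibility, and $S$-torsion-freeness of $M$ gives $m=d\in h_S(M)$. With that inserted, your proof closes: $N$ is $S$-torsion-free, $S$-divisible, hence an $R_S$-module, hence $S$-$h$-divisible, and being also $S$-$h$-reduced it vanishes. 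So the proposal is repairable, but as written it is incomplete at precisely the step you identified.

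The larger point is that the detour through $N=M/h_S(M)$ is unnecessary, and the paper's proof is the direct version of the ingredients you already assembled. From the structural description $0\to U\to G\to V\to 0$ with $U$ free over $R$ ($S$ regular) and $V$ a free $R_S$-module, one gets, exactly as you argue, that $G$ and hence its summand $M$ is $S$-torsion-free. But $M$ itself is $S$-divisible by hypothesis, so \cite[Proposition 2.2]{Z24-matlisloc} applies directly to $M$: it is an $R_S$-module, hence a homomorphic image of a free $R_S$-module, i.e. $S$-$h$-divisible (in the paper this is phrased via Lemma \ref{RSmoduleS-inj} and Proposition \ref{rinj-hd}). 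Applying that proposition to $M$ rather than to $M/h_S(M)$ eliminates both the purity lemma and the $S$-$h$-reduced versus $S$-$h$-divisible bookkeeping, and is the intended argument.
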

\begin{proof} Let $M$ be an $S$-divisible $S$-strongly flat $R$-module. Then $M$ is a direct summand of an
	$R$-module $G$ for which there exists an exact sequence of $R$-modules
	$$0\rightarrow U\rightarrow G\rightarrow V \rightarrow 0$$
	where $U$ is a free $R$-module and $V$ is a free $R_S$-module. Hence $M$ is $S$-torsion free. Since $M$ is $S$-divisible, $M$ is an $R_S$-module by \cite[Proposition 2.2]{Z24-matlisloc}. Hence $M$ is $S$-$h$-divisible by Lemma  \ref{RSmoduleS-inj} and Proposition \ref{rinj-hd}.
\end{proof}

Recall that a ring is called an $S$-Matlis ring if $\pd_RR_S\leq 1$. It was proved in \cite[Corollary 3.2]{BS02} that an integral domain is Matlis if and only if every divisible module admits a strongly flat cover.

\begin{lemma}\label{sdshd}\cite[Theorem 1.1]{AHT05} Let $R$ be a ring and $S$ a regular multiplicative subset of $R$.  Then $R$ is an $S$-Matlis ring if and only if every $S$-divisible $R$-module is $S$-$h$-divisible. 
\end{lemma}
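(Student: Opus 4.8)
The plan is to pivot everything through the fundamental short exact sequence $0\to R\to R_S\to K\to 0$ with $K:=R_S/R$ (which makes sense since $S$ is regular). Applying $\Hom_R(-,M)$ and using $\Ext^i_R(R,M)=0$ for $i\ge 1$ yields, for every $R$-module $M$, natural isomorphisms $\Ext^n_R(R_S,M)\cong\Ext^n_R(K,M)$ for $n\ge 2$ together with an exact sequence
\[
\Hom_R(R_S,M)\xrightarrow{\ \rho_M\ }M\to\Ext^1_R(K,M)\to\Ext^1_R(R_S,M)\to 0,
\]
where $\rho_M$ is restriction along $R\hookrightarrow R_S$, i.e. evaluation at $1$. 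In particular $\pd_RR_S\le 1\iff\pd_RK\le 1$. Moreover $\Im\rho_M=h_S(M)$: if $m=f(x)$ with $f\in\Hom_R(R_S,M)$ and $x\in R_S$, then $m=g(1)$ for $g:=f(x\cdot-)\in\Hom_R(R_S,M)$, while conversely $f(1)\in\Im f\subseteq h_S(M)$. Hence $M$ is $S$-$h$-divisible exactly when $\rho_M$ is onto, equivalently when $\Ext^1_R(K,M)\to\Ext^1_R(R_S,M)$ is injective. I will use these remarks freely.

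I would first prove the implication ``every $S$-divisible $R$-module is $S$-$h$-divisible $\Rightarrow$ $R$ is $S$-Matlis''. Let $L$ be $S$-divisible and take any extension $0\to L\to X\to R_S\to 0$. Then $X$ is again $S$-divisible: given $x\in X$, write $\bar x=s\bar y$ in $R_S$, so $x-sy\in L=sL$ and hence $x\in sX$. By hypothesis $X$ is $S$-$h$-divisible, so there is an epimorphism $R_S^{(\kappa)}\twoheadrightarrow X$; composing with $X\twoheadrightarrow R_S$ gives an epimorphism of $R_S$-modules onto the free $R_S$-module $R_S$, which splits, and lifting the splitting back through $R_S^{(\kappa)}\to X$ splits $X\to R_S$. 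Thus $\Ext^1_R(R_S,L)=0$, i.e. every $S$-divisible module is $S$-weakly cotorsion. Now, for an arbitrary $R$-module $M$, embed $0\to M\to E\to L\to 0$ with $E$ injective; then $E$ is $S$-divisible, hence so is $L$, hence $L$ is $S$-weakly cotorsion, and from the long exact sequence of $\Ext_R(R_S,-)$ and injectivity of $E$ we get $\Ext^2_R(R_S,M)\cong\Ext^1_R(R_S,L)=0$. As $M$ was arbitrary, $\pd_RR_S\le 1$.

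For the converse it is enough, by the preliminary remarks, to show that $\pd_RR_S\le 1$ forces $\Ext^1_R(K,D)=0$ for every $S$-divisible $D$ (then $\rho_D$ is onto, so $D$ is $S$-$h$-divisible). The crucial structural fact is that $K=R_S/R$ is the directed union of its cyclic submodules $\tfrac1sR/R\cong R/sR$, $s\in S$. When $S$ has countable cofinality this is a sequential colimit $K=\varinjlim_n R/s_nR$ with transition maps multiplication by $s_{n+1}/s_n\in S$, and the $\varprojlim^{1}$ (Milnor) exact sequence reads
\[
0\to\varprojlim^{1}_{n}\Hom_R(R/s_nR,D)\to\Ext^1_R(K,D)\to\varprojlim_n\Ext^1_R(R/s_nR,D)\to 0 .
\]
Here $\Ext^1_R(R/s_nR,D)\cong D/s_nD=0$ since $D$ is $S$-divisible, while the inverse system $\Hom_R(R/s_nR,D)\cong D[s_n]$ has transition maps $D[s_{n+1}]\to D[s_n]$, $d\mapsto(s_{n+1}/s_n)d$, which are surjective (if $s_ne=0$, write $e=(s_{n+1}/s_n)d$ using $S$-divisibility; then $s_{n+1}d=s_ne=0$), so the system is Mittag--Leffler and $\varprojlim^{1}=0$; hence $\Ext^1_R(K,D)=0$. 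In general one reduces to this case by using that $\pd_RK\le 1$ forces $K$ to decompose as a direct sum $\bigoplus_\alpha K_\alpha$ of countably presented modules, so that $\Ext^1_R(K,D)=\prod_\alpha\Ext^1_R(K_\alpha,D)$ with each $K_\alpha$ a countably presented $S$-torsion module (being a summand of $K$), and then running the same $\varprojlim^{1}$ computation for each $K_\alpha$.

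The hard part is exactly this last reduction and computation, and there are two delicate points. First, one must invoke---or reprove in the present generality---the decomposition of a module of projective dimension $\le 1$ into countably presented summands; without it the Milnor argument over the directed set $S$ is unavailable when, for instance, $R_S$ is not countably generated. Second, and more technical, one must present each countably presented $S$-torsion summand $K_\alpha$ as a sequential colimit of finitely presented modules whose transition maps are, after the appropriate bookkeeping, governed by multiplications by elements of $S$, so that $S$-divisibility of $D$ (surjectivity of every $s\cdot$ on $D$, together with $D/sD=0$) simultaneously kills $\varprojlim^{1}\Hom_R(-,D)$ and $\varprojlim\Ext^1_R(-,D)$. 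This adapts Matlis's classical argument for integral domains (the case $S=R\setminus\{0\}$) to arbitrary regular multiplicative sets; it is carried out in \cite{AHT05}.
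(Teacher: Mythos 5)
The paper itself gives no argument for this lemma; it is quoted verbatim from \cite{AHT05}, so the only fair comparison is with your attempt as a self-contained proof. The direction ``every $S$-divisible module is $S$-$h$-divisible $\Rightarrow$ $\pd_RR_S\le 1$'' is complete and correct: the extension argument (any extension of an $S$-divisible $L$ by $R_S$ is $S$-divisible, the splitting via the fact that $R$-maps between $R_S$-modules are automatically $R_S$-linear, and the dimension shift through an injective envelope, using regularity of $S$ to see that injectives are $S$-divisible) is exactly the standard Matlis-type argument and works verbatim here. The countable case of the converse is also fine, modulo the small point that you must arrange the cofinal chain so that the successive ratios lie in $S$ (e.g.\ $s_n=t_1\cdots t_n$ for an enumeration or cofinal sequence $(t_n)$ of $S$); otherwise ``$S$-divisibility'' does not give divisibility by $s_{n+1}/s_n$.

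The genuine gap is the general case of the converse. Your reduction rests on the claim that $\pd_RK\le 1$ forces $K$ to decompose as a direct sum of countably presented modules; as a general statement about modules of projective dimension $\le 1$ this is false (over $\mathbb{Z}$ every module has projective dimension at most $1$, yet not every abelian group is a direct sum of countably generated subgroups). What is true in general is only deconstructibility: $K$ admits a continuous filtration with countably presented quotients of projective dimension $\le 1$, and then one must use Eklof's lemma in place of your product formula $\Ext^1_R(\bigoplus_\alpha K_\alpha,D)\cong\prod_\alpha\Ext^1_R(K_\alpha,D)$. Even after that correction, the core of the matter remains: one has to prove $\Ext^1_R(C,D)=0$ for every countably presented $S$-torsion module $C$ of projective dimension $\le 1$ and every $S$-divisible $D$ (equivalently, carry out the telescope argument for such $C$, or prove that the particular module $R_S/R$ really does split into countably presented summands -- for this specific module that is a substantive theorem, not a formal consequence of $\pd\le 1$). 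You explicitly defer exactly this step to \cite{AHT05}, which is the very result being proved; so as written your argument for the hard implication is circular in effect, and the intermediate decomposition claim used in the reduction is incorrect as stated. A complete independent proof would need either the deconstructibility-plus-Eklof route with the countably presented case worked out in detail, or the structure theorem for $R_S/R$, neither of which is supplied.
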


\begin{proposition} \label{sd-ssfc}	Let $R$ be a ring and $S$ a regular multiplicative subset of $R$. If  every $S$-divisible module admits an $S$-strongly flat cover, then $R$ is an $S$-Matlis ring.
\end{proposition}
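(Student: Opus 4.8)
The plan is to prove the contrapositive in spirit, but more directly: assume every $S$-divisible module admits an $S$-strongly flat cover, and use Lemma \ref{sdshd} to reduce the goal to showing that every $S$-divisible module is $S$-$h$-divisible. The natural test module is $R_S/R$, or rather a well-chosen $S$-divisible module built from it. Recall that an $S$-divisible module $D$ is $S$-$h$-divisible exactly when the canonical map $\operatorname{Hom}_R(R_S,D)\to D$ is surjective, equivalently when $D$ has no nonzero $S$-$h$-reduced quotient. So it suffices to exhibit, for each $S$-divisible $D$, an $S$-strongly flat cover whose properties force $h_S(D)=D$.

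First I would take an arbitrary $S$-divisible module $D$ and let $\varphi:F\to D$ be its $S$-strongly flat cover, with kernel $C$, so that $0\to C\to F\xrightarrow{\varphi} D\to 0$ is exact and (by the general theory of covers coming from cotorsion pairs, cf.\ the $S$-weakly cotorsion / $S$-strongly flat pair) $C$ is $S$-weakly cotorsion. Since $S$-strongly flat modules are, up to direct summands, extensions of a free $R$-module by a free $R_S$-module, $F$ surjects onto an $R_S$-module; more to the point, I want to argue that $F$ is itself $S$-divisible (hence $S$-torsion-free once we have killed the reduced part) and then apply Lemma \ref{s-d-ssf} to conclude $F$ is $S$-$h$-divisible, whence so is its quotient $D$. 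The delicate point is that a priori $F$ need not be $S$-divisible; however, the $S$-divisible part $d(F)$ of $F$ already maps onto $D$ (because $D$ is $S$-divisible and $\varphi$ is surjective, $d(F)$ surjects onto $d(D)=D$), and by minimality of the cover the inclusion $d(F)\hookrightarrow F$ together with $\varphi$ restricted to $d(F)$ must essentially be the cover again; one shows $d(F)=F$, i.e.\ $F$ is $S$-divisible. Then $F$ is an $S$-divisible $S$-strongly flat module, so $F$ is $S$-$h$-divisible by Lemma \ref{s-d-ssf}, and $D$, being a quotient of $F$, is $S$-$h$-divisible as well.

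Having shown every $S$-divisible $R$-module is $S$-$h$-divisible, Lemma \ref{sdshd} immediately gives that $R$ is an $S$-Matlis ring, completing the argument. The main obstacle I anticipate is the step asserting that the $S$-strongly flat cover $F$ of an $S$-divisible module is itself $S$-divisible; this is where the covering (rather than precovering) property is essential, and it should follow from a Wakamatsu-type argument: the composite $d(F)\hookrightarrow F\xrightarrow{\varphi}D$ is still an $S$-strongly flat precover (since $d(F)$ is $S$-divisible and $S$-strongly flat — here I would need that $d(F)$ is $S$-strongly flat, which may require instead taking the summand of $F$ generated by its $R_S$-module quotient, or working with $h_S(F)$ in place of $d(F)$), and minimality forces $F=d(F)$. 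If that identification is awkward directly, the cleaner route is to replace $D$ throughout by the $S$-$h$-divisible hull argument: pass to $\operatorname{Hom}_R(R_S,D)\to D$, note its cokernel is $S$-$h$-reduced and $S$-divisible, and derive a contradiction with the existence of an $S$-strongly flat cover of that cokernel unless it vanishes. Either way, the crux is squeezing $S$-$h$-divisibility out of the uniqueness half of the cover definition, after which Lemmas \ref{s-d-ssf} and \ref{sdshd} do the bookkeeping.
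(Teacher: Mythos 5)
Your overall skeleton is the same as the paper's: show that the $S$-strongly flat cover $F$ of an $S$-divisible module $D$ is itself $S$-divisible, invoke Lemma \ref{s-d-ssf} to get that $F$ (hence its quotient $D$) is $S$-$h$-divisible, and finish with Lemma \ref{sdshd}. But the one step you yourself flag as the crux is not established, and the argument you sketch for it is false. You claim that since $\varphi$ is surjective and $D$ is $S$-divisible, the $S$-divisible part $d(F)$ surjects onto $d(D)=D$; surjectivity does not transfer divisible parts in this way. A free module surjects onto $\mathbb{Q}/\mathbb{Z}$ over $\mathbb{Z}$ while its divisible part is zero, so in general $d(F)$ need not map onto $D$ at all, the composite $d(F)\hookrightarrow F\to D$ need not be a precover (it may not even be surjective), and $d(F)$ need not be $S$-strongly flat. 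The alternative you mention (passing to the cokernel of $\operatorname{Hom}_R(R_S,D)\to D$ and hoping to contradict the existence of its cover) runs into exactly the same missing ingredient, since the contradiction would again require knowing that covers of $S$-divisible modules are $S$-divisible.

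The paper closes this gap with a per-element lifting argument rather than a Wakamatsu-type argument on $d(F)$. Fix $s\in S$. Since $D$ is $S$-divisible and $\varphi$ is onto, $sF+K=F$ (where $K=\ker\varphi$), so the restriction $sF\to D$ is still surjective with kernel $sF\cap K$. From the exact sequence $0\to sF\cap K\to K\to F/sF\to 0$, the fact that $\operatorname{Hom}_R(R_S,F/sF)=0$ (any image of $R_S$ in $F/sF$ is $S$-divisible but killed by $s$) and $\operatorname{Ext}^1_R(R_S,K)=0$ give that $sF\cap K$ is $S$-weakly cotorsion. Since $F$ is $S$-strongly flat, $\operatorname{Ext}^1_R(F,sF\cap K)=0$, so $\varphi$ lifts to $g:F\to sF$ with $\varphi\circ i\circ g=\varphi$, where $i:sF\hookrightarrow F$. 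The cover property forces $i\circ g$ to be an automorphism, hence $i$ is onto and $sF=F$. Iterating over all $s\in S$ shows $F$ is $S$-divisible, and the rest of your outline then goes through. Without some argument of this kind, your proof has a genuine gap at its central step.
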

\begin{proof}
	Let $M$ be an $S$-divisible $R$-module, and $0\rightarrow K\rightarrow F\xrightarrow{f} M\rightarrow 0$ be an exact sequence with $f$ the $S$-strongly flat cover of $M$. Then $K$ is  $S$-weakly cotorsion.
	
First we show that the $S$-strongly flat module $F$ is $S$-divisible. As $M$ is $S$-divisible,  we have $sF + K = F$ for any $s \in S$. Considering the exact sequence
$$
	0 \to sF \cap K \to K \to F/sF \to 0,
$$
we have an sequence  $$0=\Hom_R(R_S,F/sF)\rightarrow \Ext_R^1(R_S,sF\cap F)\rightarrow \Ext_R^1(R_S,K)=0.$$ So $\Ext_R^1(R_S,sF\cap F)=0$, that is, $sF \cap K$ is $S$-weakly cotorsion. Therefore, there exists a map $g: F \to sF$ making  the following diagram commute:	
$$\xymatrix@C=40pt@R=30pt{
0\ar[r]&	K \ar[r]\ar[d] & F\ar[r]^f\ar@{.>}[d]^{g}&M \ar[r]\ar@{=}[d]& 0\\
0\ar[r]&rF\cap K \ar[d] \ar[r] & rF \ar[d]^{i}\ar[r]&M \ar@{=}[d]\ar[r] &0 \\
0\ar[r]&K  \ar[r] & F \ar[r]^f &M \ar[r] &0
}$$		
with the embedding map $i: sF \to F$. The diagram shows that $f = f\circ i\circ g$, whence $i\circ g$ is an automorphism of $F$  by the cover property of $F$. Consequently, $i$ is an epimorphism, and $sF = F$, as claimed. Thus the $S$-strongly flat module $F$ is $S$-divisible, and hence is also $S$-$h$-divisible by Lemma \ref{s-d-ssf}. Hence, every $S$-divisible $R$-module is $S$-$h$-divisible.  Consequently, $R$ is an $S$-Matlis ring by Lemma \ref{sdshd}.
\end{proof}

\begin{theorem}\label{main2}
	Let $R$ be a ring and $S$ a regular multiplicative subset of $R$.  Suppose $R_S$ is a semisimple ring.  Then every $S$-divisible module admits an $S$-strongly flat cover if and only if $R$ is an $S$-Matlis ring.
\end{theorem}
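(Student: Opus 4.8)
The plan is to prove Theorem~\ref{main2} by splitting it into its two implications and assembling the pieces already established. The ``only if'' direction is immediate: it is exactly the content of Proposition~\ref{sd-ssfc}, which requires only that $S$ be regular and does not use the semisimplicity of $R_S$. So the work lies entirely in the ``if'' direction: assuming $R$ is an $S$-Matlis ring and $R_S$ is semisimple, show every $S$-divisible $R$-module $M$ admits an $S$-strongly flat cover.

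First I would invoke Lemma~\ref{sdshd}: since $R$ is $S$-Matlis, every $S$-divisible module is in fact $S$-$h$-divisible. Hence the class of $S$-divisible modules coincides with the class of $S$-$h$-divisible modules, and the conclusion follows directly from Theorem~\ref{main1}, which already guarantees that every $S$-$h$-divisible module admits an $S$-strongly flat cover under the hypothesis that $R_S$ is semisimple. This makes the ``if'' direction a one-line deduction once the equivalence of the two divisibility notions is in hand.

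Thus the proof is essentially a bookkeeping argument: ``only if'' is Proposition~\ref{sd-ssfc}; ``if'' is Lemma~\ref{sdshd} followed by Theorem~\ref{main1}. I do not expect any genuine obstacle here, since all the analytic content has been front-loaded into the earlier results. The only point to be careful about is that the equivalence ``$S$-divisible $\Leftrightarrow$ $S$-$h$-divisible'' is used in the right direction, namely that $S$-Matlis forces $S$-divisible modules to be $S$-$h$-divisible, so that Theorem~\ref{main1} applies to the whole class of $S$-divisible modules and not merely a subclass.

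\begin{proof}
	If every $S$-divisible module admits an $S$-strongly flat cover, then $R$ is an $S$-Matlis ring by Proposition \ref{sd-ssfc}.

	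Conversely, suppose $R$ is an $S$-Matlis ring. Then by Lemma \ref{sdshd} every $S$-divisible $R$-module is $S$-$h$-divisible. Hence the class of $S$-divisible $R$-modules coincides with the class of $S$-$h$-divisible $R$-modules. Since $R_S$ is a semisimple ring, it follows by Theorem \ref{main1} that every $S$-$h$-divisible module, and thus every $S$-divisible module, admits an $S$-strongly flat cover.
\end{proof}
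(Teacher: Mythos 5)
Your proposal is correct and follows exactly the paper's argument, which likewise proves the theorem by combining Proposition \ref{sd-ssfc} for the ``only if'' direction with Lemma \ref{sdshd} and Theorem \ref{main1} for the ``if'' direction. The only (harmless) extra remark is your claim that the two classes coincide; strictly you only need the inclusion of $S$-divisible modules among $S$-$h$-divisible ones, which is what Lemma \ref{sdshd} supplies.
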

\begin{proof} Combine Theorem \ref{main1}, Lemma \ref{sdshd}, and Proposition \ref{sd-ssfc}.
\end{proof}
\begin{remark}\label{remk2}
	Note that the condition that $R_S$ is semisimple can not be removed in Theorem \ref{main2} similar to Remark \ref{remk1}. Indeed, let $R$ be a non-semisimple total ring of quotients, and $S$ be the set of all non-zero-divisors of $R$. Then all $R$-modules are $S$-divisible modules; and $S$-strongly flat modules are exactly projective modules.  However,  every $R$-module does not admit a projective cover over non-semisimple rings.
\end{remark}

\textbf{Conflict of interest.} The author states that there is no conflict of interest.

\end{document}